\newcommand{\abs}[1]{\left\lvert #1 \right\rvert}
\newcommand{\beq} {\begin{eqnarray*}}
\newcommand{\eeq} {\end{eqnarray*}}
\newcommand{\noi} {\noindent}
\def \R{\mathbb{R}}
\def \C{\mathbb{C}}
\def \N{\mathbb{N}}
\def \Z{\mathbb{Z}}
\def \E{\mathbb{E}}
\def \P{\mathbb{P}}
\newcommand{\dis}{\displaystyle }
\def \leqp{\leqslant}
\def \geqp{\geqslant}
\newcommand{\1}{{1\hspace{-0.2ex}\rule{0.12ex}{1.61ex}\hspace{0.5ex}}}
\newcommand{\promille}{%
  \relax\ifmmode\promillezeichen
        \else\leavevmode\(\mathsurround=0pt\promillezeichen\)\fi}
\newcommand{\promillezeichen}{%
  \kern-.05em%
  \raise.5ex\hbox{\the\scriptfont0 0}%
  \kern-.15em/\kern-.15em%
  \lower.25ex\hbox{\the\scriptfont0 00}}
\begin{document}



\section{Introduction}\label{sec:pc}

{\bf 1.1 Motivation.}  The local score of a biological sequence  is its "best" segment   with respect to some scoring scheme (see e.g. \cite{Wat95} for more details) and the knowledge of its distribution is important 
(see e.g. \cite{KAl90}, \cite{MD01}).  Let us briefly recall the mathematical setting while biological interpretations can be found in \cite{CLMV14}. Let $S_n:=\epsilon_1+\cdots+\epsilon_n $ be the  random walk generated by the sequence of the independent and identically distributed random variables  $(\epsilon_i,\, i\in \mathbb{N})$ that are centered with unit variance.
%
%
The local score is the process: $\dis U_n:=S_n-\min\limits_{0\leqp i\leqp n}S_i$, where $n\geqp 0$.
%
\noi The path of  $(U_n,\, n\in \N)$ is a succession of $0$ and excursions above $0$. In \cite{CLMV14}, the authors only took into consideration   {\it complete} excursions up to a fixed time $n$ and so considered
the maximum $U^*_n$ of the heights of all the complete excursions up to time $n$ instead of the maximum $\overline U_n$ of the path until time $n$. They also introduced the random time $\theta^*$ of the length of the segment that realizes $U^*_n$.
Since it is easy to simulate $(S_k,\, 0\leqp k\leqp n)$, for any $n$ not too large, we get an approximation of the law of $(U^*_n, \theta^*_n)$ for a given $n$. Simulations have  shown that for an important proportion of sequences, $\overline U_n$
is realized during the last incomplete excursion. As expected, the number of excursions naturally increases when the length of the sequence growths, however the proportion of sequences that achieve their maximum on a complete excursion remains strikingly  constant.
The main goal of this study is to explain these observations  and to calculate this probability when $n$ is large, see Proposition \ref{pNY} below.

{\bf 1.2 Link with the Brownian motion.} According to the functional convergence theorem  of Donsker, the random walk $(S_k,\; 0\leqp k\leqp n)$ (resp.
$(U_k,\; 0\leqp k\leqp n)$) normalized by the factor $1/\sqrt{n}$ converges in distribution, as $n\rightarrow\infty$, to the Brownian motion (resp. the reflected Brownian motion). We prove (see Theorem \ref{tTu1} for a precise formulation) that the probability that the maximum of a reflected Brownian motion over a finite interval $[0,t]$ is achieved on a complete excursion is around 30$\%$ and is thus independent of $t$. This result permits to answer to the two questions asked in the discrete setting, when $n$ is large.



\noi
Let $U$ be the reflected Brownian motion started at $0$, i.e. $U_t=|B_t|$ where $(B_t)_t$ is the standard one-dimensional Brownian motion started at $0$. In Chabriac \textit{et al.} \cite{CLMV14}, the authors have considered two maxima: $\dis \overline{U}(t)$ and $U^*(t)$, the first (resp. second)  one being the maximum of $U$ up to time $t$ (resp. the  last zero before $t$), namely  $\dis \overline{U}(t):=\max_{0\leqp s\leqp t}U_s$ and $U^*(t)=\overline{U}\big(g(t)\big)$, where $g(t):= \sup\{s\leqp t, U(s)=0\}$. In \cite{CLMV14}, the density function of the pair $\big(U^*(t),\theta^*(t)\big)$ has been calculated where $\theta^*(t)$ is the first hitting time of level $U^*(t)$ by the process $\big(U_s, 0\leqp s\leqp g(t)\big)$. Here we only deal with $U^*(t)$ and $\overline{U}(t)$.


It is clear that $\overline{U}(t)=U^*(t)$ if and only if $\dis  U^{**}(t) \leqp \overline{U}(t)$, where
\begin{equation}\label{Tur1}
    U^{**}(t):=\max_{g(t)\leqp s \leqp t} U(s).
\end{equation}
In that case, the maximum of $U$ over $[0, t]$ is the maximum of all the complete excursions of $U$ which hold before $t$. We introduce the probability $p_c$ that the maximum of $U$ over $[0, t]$ is achieved on a complete excursion:
\begin{equation}\label{def:pc}
p_c=\P\left(\overline{U}(t)=U^*(t)\right)=\P\left(U^*(t)>U^{**}(t)\right),
\end{equation}
Let $\psi$ be the logarithmic derivative of the Gamma function:
\begin{equation}\label{Bor3}
   \psi(x):=\Gamma'(x)/\Gamma(x).
\end{equation}
%
The main result of our study is

\begin{theorem}\label{tTu1}
The probability $p_c$ equals $\psi\left( 1/4\right) - \psi\left( 1/2\right)+1+\pi/2\approx 0.3069$.
\end{theorem}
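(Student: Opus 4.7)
By Brownian scaling it suffices to prove the theorem for $t=1$, and I shall exploit the decomposition of $(U_s)_{0\le s\le 1}$ at the last zero $g:=g(1)$. Recall that $g$ follows the arcsine law with density $1/(\pi\sqrt{u(1-u)})$ on $(0,1)$, and that conditionally on $g=u$ the rescaled pre-$g$ process $(U_{us}/\sqrt u)_{0\le s\le 1}$ is a reflected Brownian bridge of length $1$, while the rescaled post-$g$ process $(U_{u+(1-u)s}/\sqrt{1-u})_{0\le s\le 1}$ is an independent Brownian meander. Denoting their maxima by $M_1$ and $M_2$, one has $U^*(1)=\sqrt g\,M_1$ and $U^{**}(1)=\sqrt{1-g}\,M_2$, and therefore
\[ p_c=\int_0^1\frac{du}{\pi\sqrt{u(1-u)}}\,\P\!\left(\sqrt u\,M_1>\sqrt{1-u}\,M_2\right). \]

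\textbf{Distributional inputs.} The law of $M_1$ is Kolmogorov's: $\P(M_1\le x)=\sum_{n\in\Z}(-1)^n e^{-2n^2x^2}$. An integration by parts combined with the partial fraction $\pi/\sinh(\pi a)=1/a+2a\sum_{n\ge 1}(-1)^n/(a^2+n^2)$ yields the Laplace transform
\[ \E\!\left[e^{-\lambda M_1^2}\right]=\frac{\pi\sqrt{\lambda/2}}{\sinh(\pi\sqrt{\lambda/2})}. \]
For $M_2$ I would use the Bessel(3) bridge representation of the meander together with a Jacobi theta identity to obtain the classical distributional equality $M_2\stackrel{d}{=}2M_1$, so $\P(M_2\le y)=\sum_{n\in\Z}(-1)^n e^{-n^2y^2/2}$. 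Conditioning on $M_1$, summing term by term and setting $c:=(\pi/2)\sqrt{u/(1-u)}$, this gives
\[ \P\!\left(\sqrt u\,M_1>\sqrt{1-u}\,M_2\right)=1-2\sum_{n\ge 1}(-1)^{n-1}\frac{nc}{\sinh(nc)}. \]

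\textbf{Final reduction.} Since $\int_0^1 du/(\pi\sqrt{u(1-u)})=1$, we get $p_c=1-2J$ where $J$ is the integral of the above series against the arcsine density. The change of variable $u\mapsto c$ sends $du/(\pi\sqrt{u(1-u)})$ to $4\,dc/(\pi^2+4c^2)$, and after the rescaling $y=nc$ each summand becomes $I_n:=\int_0^\infty y\,dy/((n^2\pi^2+4y^2)\sinh y)$. Writing $y/(n^2\pi^2+4y^2)=\tfrac12\int_0^\infty e^{-n\pi t}\sin(2yt)\,dt$ and applying the classical integral $\int_0^\infty \sin(ay)/\sinh y\,dy=(\pi/2)\tanh(\pi a/2)$, one obtains $I_n=\tfrac14\int_0^\infty e^{-ns}\tanh s\,ds$. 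Summing using $\sum_{n\ge 1}(-1)^{n-1}n\,e^{-ns}=e^{-s}/(1+e^{-s})^2$ and substituting $u=e^{-s}$, for which $\tanh(-\ln u)=(1-u^2)/(1+u^2)$, collapses $J$ to
\[ J=\int_0^1\frac{1-u}{(1+u)(1+u^2)}\,du=\int_0^1\!\left[\frac{1}{1+u}-\frac{u}{1+u^2}\right]du=\ln 2-\tfrac12\ln 2=\tfrac12\ln 2. \]
Hence $p_c=1-\ln 2$. The announced expression is recovered from Gauss's digamma values $\psi(1/2)=-\gamma-2\ln 2$ and $\psi(1/4)=-\gamma-3\ln 2-\pi/2$, which give $\psi(1/4)-\psi(1/2)+1+\pi/2=1-\ln 2\approx 0.3069$.

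\textbf{Main obstacle.} The principal analytical hurdles are (i) securing the explicit Laplace transform of $M_1^2$ and the closed form of the meander-max law (or equivalently Chung's identity $M_2\stackrel{d}{=}2M_1$), both of which rest on theta-function manipulations and, for the meander, on a spectral expansion derived from the Bessel(3) representation; and (ii) justifying the repeated Fubini exchanges between the arcsine integral, the series expansion of the CDF, and the $\tanh$-Laplace representation, since several of the intermediate series are only conditionally convergent and require an initial integration by parts before term-by-term manipulation becomes legal.
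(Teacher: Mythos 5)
Your proposal is correct, and it reaches the paper's answer by a genuinely different computational route. Both arguments start from the same decomposition at the last zero $g(1)$: arcsine law for $g(1)$, independence of the rescaled bridge and meander, and $\left(U^*(1),U^{**}(1)\right)\overset{(d)}{=}\left(\sqrt{g(1)}\,b^*,\sqrt{1-g(1)}\,\max m\right)$. From there the paper does \emph{not} use the law of $\max m$ directly: it invokes the Imhof/Biane--Le Gall--Yor absolute continuity of the meander with respect to the Bessel(3) process, computes $F(x)=\E\big[R(1)^{-1}\1_{\{\max R<x\}}\big]$ from the Borodin--Salminen tables, reduces $p_c$ to $8\int_0^\infty uA(u^2)/\sinh(2\pi u)\,du$ with $A(u)=\sum_{k\geqp1}(-1)^{k-1}k/(k^2+u)$, expresses $A$ through the digamma function, and finishes by contour integration; the residues at $1/4$ and $1/2$ are precisely where $\psi(1/4)$ and $\psi(1/2)$ enter, and the final answer is left in that form. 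You instead feed in Chung's identity $\max m\overset{(d)}{=}2b^*$ (equivalently the Durrett--Iglehart law of the meander maximum), the closed-form Laplace transform $\E[e^{-\lambda (b^*)^2}]=\pi\sqrt{\lambda/2}/\sinh(\pi\sqrt{\lambda/2})$, and elementary real integration; I have checked each step ($du/(\pi\sqrt{u(1-u)})\mapsto 4\,dc/(\pi^2+4c^2)$, the $\sin/\sinh$ integral, the $\tanh$ Laplace representation, and the partial-fraction evaluation of $J=\tfrac12\ln 2$), and they are all sound. Your route buys two things: it avoids the Bessel(3) detour and all complex analysis, and it exposes the elementary closed form $p_c=1-\ln 2$, which the paper's expression $\psi(1/4)-\psi(1/2)+1+\pi/2$ conceals (they agree by Gauss's values $\psi(1/2)=-\gamma-2\ln2$ and $\psi(1/4)=-\gamma-3\ln2-\pi/2$). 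The price is that you import the meander-maximum law as a black box, whereas the paper derives what it needs from the Bessel(3) representation; note, though, that the identity $\max m\overset{(d)}{=}2b^*$ can itself be obtained from the very absolute-continuity relation the paper uses, so nothing circular or unavailable is being assumed. The Fubini issues you flag are real --- the series $\sum_n(-1)^{n-1}nc/\sinh(nc)$ is not absolutely integrable against $dc/(\pi^2+4c^2)$ near $c=0$ --- but they are handled exactly as in the paper's Proposition \ref{prop:dens}, by pairing consecutive terms of the alternating series (the bound \eqref{Bor15}) and a cut-off; so this is a routine repair, not a gap.
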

{\bf 1.3 Back to discrete sequences.} We now go back to the setting of random walks introduced in paragraph 1.1. Let $p_c^{(n)}$ be  the probability that the maximum of $\big(U_k, 0\leqp k\leqp n\big)$ is achieved on a complete excursion, namely
\begin{equation}\label{NY1}
    p_c^{(n)}:=P\Big(\max_{0\leqp k\leqp n}U_k=\max_{0\leqp k\leqp g_n}U_k\Big)\quad \mbox{ where }g_n:=\max\{k\leqp n, U_k=0\}.
\end{equation}
%
\begin{proposition}\label{pNY} $p_c^{(n)}$ converges to $p_c$ as $n\rightarrow \infty$.
\end{proposition}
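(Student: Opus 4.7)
I would derive the convergence from Donsker's invariance principle. Set $S^{(n)}_t := S_{\lfloor nt\rfloor}/\sqrt n$ and $V^{(n)}_t := S^{(n)}_t - \inf_{0\le s\le t} S^{(n)}_s$ for $t\in[0,1]$ (piecewise-linearly interpolated), so that $V^{(n)}_{k/n} = U_k/\sqrt n$. Donsker's theorem yields $S^{(n)}\Longrightarrow B$ in $C([0,1])$, and since the reflection map $\omega\mapsto\omega-\inf_{[0,\cdot]}\omega$ is continuous, $V^{(n)}\Longrightarrow V$ with $V_t:=B_t-\inf_{s\le t}B_s$, which has the same law as the reflected Brownian motion $U$ by L\'evy's identity.

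The key combinatorial identity is
\begin{equation*}
g_n\;=\;\max\bigl\{k\le n:S_k=\min_{0\le i\le n}S_i\bigr\},
\end{equation*}
because $U_k=0$ iff $S_k$ equals the running minimum up to $k$, and the running minimum coincides with the global minimum over $[0,n]$ precisely up to the last $k$ where it is attained. The analogous identity realises $g(1)$ as the (a.s.\ unique) argmin of $B$ on $[0,1]$. The argmin functional on $C([0,1])$ is continuous at paths with a unique minimum, so Skorokhod's representation theorem furnishes a coupling in which $V^{(n)}\to V$ uniformly and $g_n/n\to g(1)$ almost surely. Composing with the jointly continuous map $(\omega,s)\mapsto\bigl(\max_{[0,1]}\omega,\max_{[0,s]}\omega,\max_{[s,1]}\omega\bigr)$ we obtain
\begin{equation*}
\bigl(n^{-1/2}\max_{0\le k\le n}U_k,\,n^{-1/2}\max_{0\le k\le g_n}U_k,\,n^{-1/2}\max_{g_n\le k\le n}U_k\bigr)\;\Longrightarrow\;\bigl(\overline U(1),U^*(1),U^{**}(1)\bigr).
\end{equation*}

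Since $p_c^{(n)}=\P\bigl(\max_{g_n\le k\le n}U_k\le\max_{0\le k\le g_n}U_k\bigr)$ and, by \eqref{def:pc}--\eqref{Tur1}, $p_c=\P(U^{**}(1)\le U^*(1))$, the portmanteau theorem reduces the proof to
\begin{equation*}
\P\bigl(U^{**}(1)=U^*(1)\bigr)=0,
\end{equation*}
which is the only mildly delicate step. I would invoke the post-$g$ description of Brownian motion: conditionally on the $\sigma$-field $\mathcal F_{g(1)}$ generated by $(U_s,\,s\le g(1))$ (which already determines $U^*(1)$), the rescaled fragment $s\mapsto U\bigl(g(1)+s(1-g(1))\bigr)/\sqrt{1-g(1)}$ is a standard Brownian meander independent of $\mathcal F_{g(1)}$, whose maximum has an absolutely continuous law. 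The conditional probability that $U^{**}(1)$ coincides with the prescribed value $U^*(1)$ therefore vanishes, yielding the claim and hence $p_c^{(n)}\to p_c$.
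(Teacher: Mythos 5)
Your argument is correct, but it takes a genuinely different route from the paper. The paper disposes of Proposition \ref{pNY} in two lines by citing Theorem 3.3 of \cite{CLMV14} (a convergence result already established there for the discrete local score) together with the observation that the event $N$ of (4.23) in \cite{CLMV14} is contained in $\big\{\max_{0\leqp k\leqp n}U_k=\max_{0\leqp k\leqp g_n}U_k\big\}$; it is essentially a reduction to prior work. You instead give a self-contained invariance-principle proof: Donsker plus the reflection map, the identification of $g_n$ with the last argmin of the walk (your combinatorial identity is correct, and so is its continuum analogue via L\'evy's identity), continuity of the argmin at paths with a unique minimizer, the continuous mapping theorem for the triple of maxima, and portmanteau. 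You correctly isolate the one delicate point, namely $\P\big(U^{**}(1)=U^*(1)\big)=0$, and your justification via the post-$g(1)$ meander decomposition is sound and is in fact the same independence structure the paper records in \eqref{eq:meandre_indep} for the computation of $p_c$ itself. What your approach buys is independence from \cite{CLMV14}; what it costs is having to handle a few routine technicalities you pass over quickly (the maxima of the interpolated path over $[0,g_n/n]$ and $[g_n/n,1]$ being attained at grid points, and the joint a.s.\ convergence of $(V^{(n)},g_n/n)$ in the Skorokhod coupling), all of which are standard and do not affect the validity of the argument.
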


\noi The convergence of $p_c^{(n)}$ can be obtained from Theorem 3.3 in \cite{CLMV14} and the fact that  
the event $N$ defined by (4.23) in \cite{CLMV14} is actually included in $\Big\{\max_{0\leqp k\leqp n}U_k=\max_{0\leqp k\leqp g_n}U_k\Big\}$.

{\bf 1.4 Main steps of the proof.} We now consider the Brownian motion setting. The density function of $\overline{U}(t)$ is known (see either Subsection 2.11 in  \cite{Billingsley99} or Lemma 3.2 in \cite{RVY08}) and the one of $U^*(t)$ has been calculated in \cite{CLMV14}. Obviously, the knowledge of the  distributions of $\overline{U}(t)$ and $U^*(t)$ is not sufficient to determine $p_c$. The trajectory of $(U_s, 0\leqp s\leqp t)$ naturally splits in two parts before and after the random time $g(t)$ which is not a stopping time. Although $\big(U_s, 0\leqp s\leqp g(t)\big)$ and $\big(U_s, g(t)\leqp s\leqp t\big)$ are not independent, the scaling with $ g(t)$ leads to independence. Indeed,

\begin{align}\label{eq:meandre_indep}
&\left(\frac{1}{\sqrt{g(t)}}U(sg(t)),\; 0\leqp s \leqp 1\right),\; \left(\frac{1}{\sqrt{t-g(t)}}\abs{B(g(t)+s(t-g(t))},\; 0\leqp s \leqp 1\right), \; g(t)
\end{align}
are independent.
Moreover each part of the above triplet has a known distribution. The process $\left(g(t)^{-1/2}B(g(t)s),\; 0\leqp s\leqp 1\right)$ is distributed as the Brownian bridge $\big(b(s),\; 0\leqp s\leqp 1\big)$, (see e.g. \cite{Bertoin96}) and the second component in \eqref{eq:meandre_indep} is the Brownian meander denoted $m$. The scaling property of the Brownian motion implies that $g(t)$ is distributed as $t g(1)$ while the distribution of $g(1)$ is the arcsine one (see again \cite{Bertoin96}):
\begin{equation}\label{eq:dist_g1}
    \P(g(1)\in dx)=\frac{1}{\pi\sqrt{x(1-x)}}\1_{[0,1]}(x)\,dx.
\end{equation}
Consequently,
\begin{align}\label{eq:U*/U**}
\left(U^*(t),U^{**}(t)\right)&\overset{(d)}{=}\left(\sqrt{tg(1)}b^*,\sqrt{t(1-g(1))} \max_{0\leqp u\leqp 1} m(u)\right)
\end{align}
where $\dis b^*:=\sup_{0\leqp s\leqp 1} |b(s)|$. Its distribution function  is given by the Kolmogorov-Smirnov formula   (see e.g. \cite{PY01}):
\begin{equation}\label{eq:dist_b*}
    \P(b^*>x)=2\sum_{k\geqp 1} (-1)^{k-1} e^{-2k^2x^2}, \quad x>0.
\end{equation}
Formula \eqref{eq:U*/U**} permits to determine the law of $\big(U^*(t),U^{**}(t)\big)$, once we know the distribution of
$\dis \max_{0\leqp u\leqp 1} m(u)$. But by \cite{BLGY87}, for any bounded Borel function $f$,
\begin{align}
\E[f(m(u),\; 0\leqp u\leqp 1)]&=\sqrt{\frac{\pi}{2}}\E\left[\frac{1}{R(1)}f(R(u),\; 0\leqp u \leqp 1)\right],
\end{align}
where $(R(u),0\leqp u\leqp 1)$ stands for a 3-dimensional Bessel started at 0.

\noi Due to the scaling property \eqref{eq:U*/U**}, we deduce that $p_c$ does not depend on $t$ and
\begin{equation}\label{Bor1}
p_c =\sqrt{\frac{\pi}{2}}\E\left[F\left(b^*\sqrt{\frac{g(1)}{1-g(1)}}\right)\right]
\end{equation}
where
\begin{align}\label{def:F}
F(x) &:=  \E\left[\frac{1}{R(1)}\1_{\left\{\max_{0\leqp u \leqp 1}R(u)<x\right\}}\right].
\end{align}

According to \eqref{Bor1} we have first to determine the function $F$ (see Lemma \ref{lem:F} below), second the distribution function of
$\dis b^*\sqrt{\frac{g(1)}{1-g(1)}}$ (see Proposition \ref{prop:dens}) and third to calculate the expectation. The details are given in Section \ref{sBor1}.

\section{Proof of Theorem \ref{tTu1}}\label{sBor1}

\setcounter{equation}{0}

\begin{lemma}\label{lem:F}
For any $x>0$,
\begin{align}\label{eq:F}
F(x)&= \sqrt{\frac{2}{\pi}} \sum_{k\in \Z} \left\{ e^{-2k^2x^2} - e^{-(2k+1)^2x^2/2}\right\}=\frac{4}{x} \sum_{k\geqp 0} \exp\left\{-\frac{(2k+1)^2\pi^2}{2x^2} \right\}.
\end{align}
\end{lemma}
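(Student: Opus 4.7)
The strategy is to convert the expectation defining $F$ into a probability for one-dimensional Brownian motion confined to $(0,x)$, via Doob's $h$-transform, and then to read off the two forms of $F(x)$ from the two classical expansions of the heat kernel on $(0,x)$ killed at both endpoints.

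For $\epsilon > 0$, let $\beta^{\epsilon}$ denote standard Brownian motion started at $\epsilon$, let $R^\epsilon$ denote the three-dimensional Bessel process started at $\epsilon$, and set $\tau_a := \inf\{s \geq 0 : \beta^\epsilon(s) = a\}$. Since $r \mapsto r$ is positive harmonic for Brownian motion killed at $0$, one has the classical absolute continuity relation
\begin{equation*}
\E\bigl[g(R^\epsilon(s),\, s \leq 1)\bigr] \;=\; \tfrac{1}{\epsilon}\,\E\bigl[\beta^\epsilon(1)\, g(\beta^\epsilon(s),\, s \leq 1);\,\tau_0 > 1\bigr]
\end{equation*}
for every nonnegative functional $g$. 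Choosing $g(\omega) = \omega(1)^{-1}\1_{\{\max\omega < x\}}$ cancels the factor $\beta^\epsilon(1)$ against $\omega(1)^{-1}$, and letting $\epsilon\downarrow 0$ gives
\begin{equation*}
F(x) \;=\; \lim_{\epsilon \downarrow 0}\,\frac{1}{\epsilon}\,\P_\epsilon(\tau_0 \wedge \tau_x > 1) \;=\; \int_0^x \frac{\partial q_1^{x}}{\partial a}(0,y)\, dy,
\end{equation*}
where $q_t^{x}(a,y)$ denotes the transition density of Brownian motion on $(0,x)$ killed at both endpoints.

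The two representations of $F$ in \eqref{eq:F} now arise from two standard expansions of $q_t^{x}$. The method-of-images formula $q_t^{x}(a,y) = \sum_{k\in\Z}\{p_t(y-a-2kx) - p_t(y+a-2kx)\}$, with $p_t$ the Gaussian density, after differentiation in $a$ at $a=0$ produces the sum $\sqrt{2/\pi}\sum_{k\in\Z}(y-2kx)e^{-(y-2kx)^2/2}$, whose antiderivative in $y$ is the elementary function $-e^{-(y-2kx)^2/2}$; integrating over $y\in[0,x]$ and relabelling $k$ telescopes into the first form of \eqref{eq:F}. The Fourier-sine eigenfunction expansion $q_1^{x}(a,y) = \tfrac{2}{x}\sum_{n\geq 1}e^{-n^2\pi^2/(2x^2)}\sin(n\pi a/x)\sin(n\pi y/x)$, integrated in $y$, retains only the odd-$n$ terms, and $\partial_a$ at $a=0$ yields the second form of \eqref{eq:F} directly. (The equality of the two series is an instance of Jacobi's theta transformation and provides a cross-check.) The only genuinely technical step is the interchange of the $\epsilon\downarrow 0$ limit with the expectation in the first display; this is a routine uniform integrability argument, using that the density of $R^\epsilon(1)$ vanishes quadratically at the origin uniformly in small $\epsilon$, so that $1/R^\epsilon(1)$ is uniformly integrable.
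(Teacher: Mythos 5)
Your proof is correct, but it reaches \eqref{eq:F} by a genuinely different route from the paper. The paper starts from the Borodin--Salminen formula for $\P\big(\max_{u\leqp 1}R(u)<y,\ R(1)\in dz\,\big\vert\, R(0)=x\big)$, Taylor-expands it at $x=0$ to get the density of the Bessel process started at the origin, integrates the resulting image series over $z\in[0,y]$ to obtain the first form, and then invokes the Poisson summation formula to pass to the second form. You instead use the $h$-transform $\E[g(R^\epsilon)]=\epsilon^{-1}\E[\beta^\epsilon(1)g(\beta^\epsilon);\tau_0>1]$ to cancel the $1/R(1)$ weight, which reduces $F(x)$ to the boundary derivative $\int_0^x \partial_a q_1^x(0,y)\,dy$ of the Dirichlet heat kernel on $(0,x)$; the two series in \eqref{eq:F} then fall out symmetrically from the method-of-images and Fourier-sine expansions of $q_1^x$. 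The core computation is the same in both cases --- differentiating the image series at the boundary and integrating (your images calculation reproduces exactly the paper's density $\sqrt{2/\pi}\sum_k(z+2ky)e^{-(z+2ky)^2/2}$, and the images/eigenfunction duality is Poisson summation in disguise) --- but your version is more self-contained, replacing the handbook citation by the elementary two-sided exit problem for Brownian motion, at the cost of having to justify the $\epsilon\downarrow 0$ limit. Your uniform-integrability sketch for that limit is adequate: the density of $R^\epsilon(1)$ near $0$ is $O(z^2)$ uniformly in small $\epsilon$, and $\1_{\{\max<x\}}$ is a.s.\ continuous under the limit law since $\max_{u\leqp 1}R(u)$ has a continuous distribution. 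One small presentational point: it would be worth stating explicitly that $\epsilon^{-1}\P_\epsilon(\tau_0\wedge\tau_x>1)\to\int_0^x\partial_a q_1^x(0,y)\,dy$ because $q_1^x(0,y)=0$ and the image series may be differentiated term by term, but this is routine.
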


\begin{proof}
First, by \cite[formula 1.1.8, p317]{BS02},
\begin{align}\label{eq:R_1}
\P\left(\max_{0\leqp u \leqp 1}R(u)<y,\; R(1)\in dz\Big\vert R(0)=x \right)=\frac{z}{x\sqrt{2\pi}} \times S \times \1_{\left\{y>x, z<y\right\}}dz
\end{align}
where
\begin{align*}
S=\sum_{k\in \Z} \left[\exp\left\{-\frac{(z-x+2ky)^2}{2}\right\}-\exp\left\{-\frac{(z+x+2ky)^2}{2}\right\}\right].
\end{align*}
A Taylor expansion of $x\mapsto \exp\left\{-\frac{(z\pm x+2ky)^2}{2}\right\}$ at $x=0$ leads to

\begin{align*}
\P\left(\max_{0\leqp u \leqp 1}R(u)<y,\; R(1)\in dz\Big\vert R(0)=0 \right)=\frac{2z}{\sqrt{2\pi}} \left[\sum_{k\in \Z} (z+2ky) \exp\left\{-\frac{(z+2ky)^2}{2}\right\} \right]\1_{\left\{z<y\right\}}dz.
\end{align*}
As a consequence,
\begin{align*}
&F(y)= \frac{2}{\sqrt{2\pi}} \sum_{k\in \Z} \int_0^y  (z+2ky) \exp\left\{-\frac{(z+2ky)^2}{2}\right\}  dz\\
&= \frac{2}{\sqrt{2\pi}} \sum_{k\in \Z} \left[ \exp\left\{-2k^2y^2\right\}  -\exp\left\{-\frac{(2k+1)^2y^2}{2}\right\}\right].
\end{align*}
The second equality in \eqref{eq:F} is a direct consequence of the Poisson summation formula (see, e.g. \cite[Chap. XIX p.630]{Feller71b}:
\begin{align*}
\frac{1}{\sqrt{2\pi t}}\sum_{k\in \Z} & \left[\exp\Big\{-\frac{1}{2t}\big(x_0+2k(\beta_0-\alpha_0)\big)^2\Big\}- \exp\Big\{-\frac{1}{2t}\big(2\beta_0-x_0+2k(\beta_0-\alpha_0)\big)^2\Big\}\right]\\
&=\frac{1}{\beta_0-\alpha_0}\sum_{k\geqp 1} \left[ \left(\cos\left(\frac{k\pi x_0}{\beta_0-\alpha_0}\right)-\cos\left(\frac{k\pi(2\beta_0- x_0)}{\beta_0-\alpha_0}\right)\right)\exp\left\{-\frac{k^2\pi^2t}{2(\beta_0-\alpha_0)^2}\right\}\right]
\end{align*}
applied with $t=1$, $x_0=0$, $\beta_0=x/2$ and  $\alpha_0=-x/2$.
\end{proof}

\begin{proposition}\label{prop:dens}
For any $x>0$,
\begin{align}\label{eq:dens}
\P\left(b^*\sqrt{\frac{g(1)}{1-g(1)}}>x \right)=\frac{2}{\pi} \int_0^{\infty} A(u) e^{-2x^2u}\frac{du}{\sqrt u},
\end{align}
where
\begin{align}\label{def:A}
A(u)&:=\sum_{k\geqp 1}  (-1)^{k-1}\frac{k}{k^2+u}.
\end{align}
\end{proposition}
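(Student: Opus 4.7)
The plan is to condition on $g(1)$, apply the Kolmogorov--Smirnov series to the conditional tail of $b^*$, execute the change of variable $u = k^2(1-v)/v$ inside each resulting integral, and then re-sum via Fubini--Tonelli to recover $A(u)$ under the $u$-integral.

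First I would invoke \eqref{eq:meandre_indep} at $t=1$: since $b^*$ is a functional of the rescaled Brownian bridge while $g(1)$ is the third component, the two are independent. Combined with the arcsine density \eqref{eq:dist_g1}, this yields
\[
\P\!\left(b^*\sqrt{\tfrac{g(1)}{1-g(1)}}>x\right) = \int_0^1 \P\!\left(b^*>x\sqrt{\tfrac{1-v}{v}}\right)\frac{dv}{\pi\sqrt{v(1-v)}}.
\]
Applying \eqref{eq:dist_b*} and pairing consecutive terms produces the representation $\P(b^*>y) = 2\sum_{j\geqp 1}\bigl(e^{-2(2j-1)^2y^2}-e^{-2(2j)^2y^2}\bigr)$, a series of non-negative summands, so Tonelli's theorem legitimises interchanging this sum with the $dv$-integral.

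The key computation follows: the substitution $u=k^2(1-v)/v$, equivalently $v = k^2/(k^2+u)$, maps $[0,1]$ onto $[0,\infty)$ and satisfies $\sqrt{v(1-v)} = k\sqrt u/(k^2+u)$. A direct check gives, for every $k\geqp 1$,
\[
\int_0^1 e^{-2k^2x^2(1-v)/v}\,\frac{dv}{\sqrt{v(1-v)}} = \int_0^\infty e^{-2x^2u}\,\frac{k\,du}{(k^2+u)\sqrt u}.
\]
Applied to each bracket of the paired series, this produces
\[
\P\!\left(b^*\sqrt{\tfrac{g(1)}{1-g(1)}}>x\right) = \frac{2}{\pi}\sum_{j\geqp 1}\int_0^\infty e^{-2x^2u}\,c_j(u)\,\frac{du}{\sqrt u}, \qquad c_j(u):=\tfrac{2j-1}{(2j-1)^2+u}-\tfrac{2j}{(2j)^2+u}.
\]

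The final and main step will be moving this sum back under the $du$-integral; the obstacle is that the series defining $A(u)$ is only conditionally convergent, so one cannot swap without first producing an absolutely convergent bound. A short simplification gives $c_j(u) = (2j(2j-1)-u)/[((2j-1)^2+u)((2j)^2+u)]$, whence $|c_j(u)|\leqp \min\!\bigl\{1/((2j-1)(2j)),\,1/u\bigr\}$. Splitting $\sum_j|c_j(u)|$ according to whether $2j(2j-1)\geqp u$ bounds it uniformly in $u$ by a constant, so Fubini's theorem applies; together with the identity $\sum_{j\geqp 1}c_j(u)=A(u)$ (the Abel pairing of the defining series of $A$) this produces \eqref{eq:dens}.
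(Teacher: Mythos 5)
Your proof is correct, and the computational core is the same as the paper's: independence of $b^*$ and $g(1)$, the arcsine density, the Kolmogorov--Smirnov series, and the change of variables $u=k^2(1-v)/v$. Where you genuinely diverge is in how the two sum--integral interchanges are justified. The paper truncates the domain to $\{g(1)<1-\varepsilon\}$ so that $(1-y)/y$ is bounded below, performs both interchanges on the truncated quantity (the second one via the identity $\tfrac{1}{1+u/k^2}=1-\tfrac{u}{k^2(1+u/k^2)}$, which produces the partial sums $S_{n(\varepsilon',u)}(u)$ bounded uniformly by pairing and the mean value inequality), and only then lets $\varepsilon\to 0$ by dominated convergence. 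You instead pair consecutive terms of the alternating series at the outset: the paired summands are non-negative, so the first interchange is free by Tonelli, and your explicit bound $|c_j(u)|=\bigl|\tfrac{2j(2j-1)-u}{((2j-1)^2+u)((2j)^2+u)}\bigr|\leq \tfrac{1}{(2j-1)(2j)}$, uniform in $u$ and summable in $j$, gives the second interchange by Fubini with no cut-off at all. Your route is shorter and avoids the slightly delicate bookkeeping of the index $n(\varepsilon',u)=\lfloor\sqrt{u/\varepsilon'}\rfloor$; note that your uniform bound on $\sum_j|c_j(u)|$ is exactly the estimate the paper records as \eqref{Bor15} and reuses later (in Lemma \ref{lBor1}) to get $\sup_{u\geq 0}A(u)<\infty$, so nothing downstream is lost. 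The only point worth making explicit in your write-up is the one you already flag implicitly: $\sum_j c_j(u)=A(u)$ because pairing a convergent alternating series does not change its sum (the terms $k/(k^2+u)$ are eventually decreasing to $0$), which is needed since $A$ is only conditionally convergent in its original form.
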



\begin{proof}
We introduce  a cut-off  $0<\varepsilon<1$ and we define:

\begin{equation}\label{Bor2}
 \varphi_{\varepsilon}(x):=\P\left(g(1)<1-\varepsilon,\; b^*\sqrt{\frac{g(1)}{1-g(1)}}>x \right).
\end{equation}
Using the independence between $g(1)$ and $b^*$,  \eqref{eq:dist_g1} and \eqref{eq:dist_b*}, we deduce:
$$\varphi_{\varepsilon}(x)=\frac{1}{\pi} \int_0^{1-\varepsilon} \frac{1}{\sqrt{y(1-y)}}\P\left(b^*>x \sqrt{\frac{1-y}{y}}\right)dy=\frac{2}{\pi} \sum_{k\geqp 1} I_k(\varepsilon)$$


where $\dis I_k(\varepsilon):=(-1)^{k-1} \int_0^{1-\varepsilon} \frac{1}{\sqrt{y(1-y)}}\exp\left\{-\frac{2k^2x^2(1-y)}{y}\right\}dy$. The inversion of the sum and the integral is available since $(1-y)/y \geqp \varepsilon'>0$ where  $\varepsilon':=\varepsilon/(1-\varepsilon)$. Making the change of variables
 $(1-y)/y=u/k^2$  leads to:
$$I_k(\varepsilon)=\frac{(-1)^{k-1}}{k} \int_{\varepsilon'k^2}^{\infty} \frac{1}{\sqrt{u}(1+u/k^2)}\exp\left\{-2x^2u\right\}du.$$


The identity $\dis \frac{1}{1+u/k^2}=1-\frac{u}{k^2(1+u/k^2)}$ allows to invert the sum and the integral. Finally we get:


\begin{align*}
\varphi_{\varepsilon}(x)&=\frac{2}{\pi} \int_{\varepsilon'}^{\infty} \left(\sum_{\varepsilon' k^2 \leqp u} \frac{(-1)^{k-1}}{k}  \frac{1}{1+u/k^2}\right)\exp\left\{-2x^2u\right\}\frac{du}{\sqrt{u}}
=\frac{2}{\pi} \int_{\varepsilon'}^{\infty} S_{n(\varepsilon', u)}(u) \frac{\exp\left\{-2x^2u\right\}}{\sqrt{u}}du
\end{align*}
with $n(\varepsilon',u)=\lfloor \sqrt{{u}/{\varepsilon'}} \rfloor$, $\dis S_{n}(u):=\sum_{k=1}^n (-1)^{k-1} \phi(1/k,u)$ and $\phi(y,u):=y/(1+uy^2)$.

Note that $\dis \left|\frac{\partial \phi}{\partial y} \right|\leqp 1$, then, considering $n=2m$ and $n=2m+1$ and using the mean value inequality we obtain:
\begin{equation}\label{Bor15}
    \abs{S_{2m}(u)}=\abs{\sum_{k'=1}^m  \phi\left(\frac{1}{2k'-1},u\right)-\phi\left(\frac{1}{2k'},u\right)}
\leqp \sum_{k'=1}^m \abs{\frac{1}{2k'-1}-\frac{1}{2k'}}\leqp \sum_{k'=1}^{\infty} \frac{1}{2k'(2k'-1)}<\infty.
\end{equation}
Similarly, $\abs{S_{2m+1}(u)}<\infty$. Since $\varepsilon'\to 0$ and $n(\varepsilon',u)\to 0$ as $\varepsilon$ goes to 0, then, identity \eqref{eq:dens} is a direct consequence of the Lebesgue dominated convergence theorem.
\end{proof}

Lemma \ref{lem:F} and Proposition \ref{prop:dens} allow to obtain a new integral form for $p_c$.

\begin{lemma}\label{lBor1}
One has
\begin{align*}
p_c&= 8\int_{0}^{\infty} \frac{uA(u^2)}{\sinh(2\pi u)}du,
\end{align*}
\end{lemma}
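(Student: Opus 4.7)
\textbf{Proof plan for Lemma \ref{lBor1}.} The starting point is formula \eqref{Bor1}, namely $p_c=\sqrt{\pi/2}\,\E\bigl[F(X)\bigr]$ where I set $X:=b^*\sqrt{g(1)/(1-g(1))}$. Proposition \ref{prop:dens} gives the tail of $X$, so differentiating under the integral sign yields the density
\begin{equation*}
f_X(x)=\frac{8x}{\pi}\int_{0}^{\infty} A(u)\sqrt{u}\, e^{-2x^{2}u}\,du, \qquad x>0.
\end{equation*}
I would plug this into $\E[F(X)]=\int_0^\infty F(x)f_X(x)\,dx$, and use the second (Poisson-transformed) expression for $F$ from Lemma \ref{lem:F}, which has the advantage of decaying rapidly at $0$:
\begin{equation*}
F(x)=\frac{4}{x}\sum_{k\geqp 0}\exp\Bigl\{-\tfrac{(2k+1)^{2}\pi^{2}}{2x^{2}}\Bigr\}.
\end{equation*}

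After multiplying, the factor $1/x$ coming from $F$ cancels the factor $x$ in $f_X$, and one is left (up to constants) with a triple sum/integral of the form $\int_0^\infty\!\int_0^\infty A(u)\sqrt{u}\sum_k e^{-(2k+1)^2\pi^2/(2x^2)-2x^2 u}\,dx\,du$. I would then swap the $x$-integral with the $k$-sum and the $u$-integral (Fubini/Tonelli, justified by the absolute convergence bound \eqref{Bor15} on $A$, together with the standard Gaussian tail controls), and evaluate the inner Gaussian-type integral by the classical identity
\begin{equation*}
\int_{0}^{\infty}\exp\Bigl\{-\tfrac{a}{x^{2}}-bx^{2}\Bigr\}dx=\frac{1}{2}\sqrt{\frac{\pi}{b}}\,e^{-2\sqrt{ab}},\qquad a,b>0,
\end{equation*}
applied with $a=(2k+1)^{2}\pi^{2}/2$ and $b=2u$. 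This produces $2\sqrt{ab}=2(2k+1)\pi\sqrt{u}$ and collapses the $\sqrt{u}$ factors into a clean constant.

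The remaining sum over $k\geqp 0$ is a geometric series which sums to $[2\sinh(2\pi\sqrt{u})]^{-1}$, after which all prefactors consolidate (one checks that $\sqrt{\pi/2}\cdot(32/\pi)\cdot\tfrac{1}{2}\sqrt{\pi/2}\cdot\tfrac{1}{2}=4$) to give
\begin{equation*}
p_c=4\int_{0}^{\infty}\frac{A(u)}{\sinh(2\pi\sqrt{u})}\,du.
\end{equation*}
The substitution $u=v^{2}$, $du=2v\,dv$ then yields the stated formula with the factor $8$.

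The only real subtlety is justifying the exchanges of integration and summation. Since $A$ is bounded (by \eqref{Bor15}) and both the $k$-sum in $F$ and the Gaussian in $x$ are absolutely integrable, Fubini applies on the non-negative rearrangement $(A^+,A^-)$; alternatively, one can group the alternating terms of $A$ in consecutive pairs, which yields absolute convergence uniformly in $u$ on compact subsets of $(0,\infty)$ and transfers immediately to the iterated integral. I expect this bookkeeping, rather than the computation itself, to be the main point requiring care.
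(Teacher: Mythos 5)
Your proposal is correct and follows essentially the same route as the paper: both start from \eqref{Bor1}, combine the Poisson-transformed expression for $F$ from Lemma \ref{lem:F} with Proposition \ref{prop:dens}, evaluate the inner integral $\int_0^\infty e^{-a/x^2-bx^2}\,dx$ in closed form (the paper via the Bessel function $K_{1/2}$, you via the equivalent elementary identity), sum the geometric series into $1/(2\sinh(2\pi\sqrt{u}))$, and substitute $u=v^2$. The constants check out and your Fubini justification matches the paper's use of the boundedness of $A$ from \eqref{Bor15}, so there is nothing further to add.
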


\begin{proof}
We deduce easily from \eqref{def:A} that
$$A(u)=\sum_{k'\geqp 1} \phi\left(\frac{1}{2k'-1},u\right)-\phi\left(\frac{1}{2k'},u\right),$$
where $\phi(y,u):=y/(1+uy^2)$. Then inequality \eqref{Bor15} implies that $\sup _{u\geqp 0}A(u)<\infty$. By Lemma \ref{lem:F}, Proposition \ref{prop:dens}, the definition \eqref{def:A} of $A$ and the Fubini theorem, we get

\begin{align*}
p_c&=\sqrt{\frac{\pi}{2}}\frac{32}{\pi} \sum_{k\geqp 0} \int_0^{\infty} \sqrt{u} A(u) \left(\int_0^{\infty} \exp\left\{-\frac{(2k+1)^2\pi^2}{2x^2}-2x^2u\right\}dx\right) du.
\end{align*}
But making $s=2x^2u$ and letting $z=2(2k+1)\pi\sqrt u$, we get:

\begin{align*}
\int_0^{\infty} \exp\left\{-\frac{(2k+1)^2\pi^2}{2x^2}-2x^2u\right\}dx&=\frac{1}{\sqrt{2u}} \left(\frac{z}{2}\right)^{1/2}K_{1/2}(z)
\end{align*}
where (cf \cite[Formula (15) p183]{Watson95})
$$K_{\nu}(x)=\frac{1}{2}\Big(\frac{x}{2}\Big)^\nu\int_0^\infty\exp\Big\{-s-\frac{x^2}{4s}\Big\}ds.$$
Recall that  $K_{\nu}=K_{-\nu}$ \cite[Formula (8) p79]{Watson95} and $\dis K_{-1/2}(x)=\sqrt{\frac{\pi}{2x}}e^{-x}$ \cite[Formula (13) p80]{Watson95} . It follows that
%



\begin{align*}
p_c&=8 \sum_{k\geqp 0} \int_0^{\infty} A(u) e^{-2(2k+1)\pi\sqrt u} du=4  \int_0^{\infty} \frac{A(u)}{\sinh{(2\pi\sqrt u)}} du=8  \int_0^{\infty} \frac{vA(v^2)}{\sinh{(2\pi v)}} dv.
\end{align*}

\end{proof}

We now focus on the function $A$. Our method is based on the  crucial fact that $A$ can be expressed with the function $\psi$  defined by \eqref{Bor3}.

\begin{lemma}\label{lem:A}
\begin{enumerate}
  \item We have:

\begin{align}\label{eq:A}
A(u)&=\frac 14 \left[\psi\left(\frac{i\sqrt u}{2}\right)+\psi\left(-\frac{i\sqrt u}{2}\right)-\psi\left(\frac{1+i\sqrt u}{2}\right)-\psi\left(\frac{1-i\sqrt u}{2}\right)\right], \quad u\geqp 0.
\end{align}
  \item There exists $a$, $b>0$ such that
\begin{align}\label{ineg:psi}
\abs{\psi(z)}\leqp a + b\abs{z}^2,\quad \forall\; z\in\mathbb{C},\; \abs{Im\; z} \geqp 1.
\end{align}
\end{enumerate}
Consequently, $ p_c= I_2-I_1$, where
\begin{equation}\label{Bor5}
I_k:=2\int_{0}^{\infty} \frac{vF_k(v)}{\sinh(2\pi v)}dv,\; k=1,2
\end{equation}
and $F_1(v):=\psi\left(\frac{1+iv}{2}\right)+\psi\left(\frac{1-iv}{2}\right)$,
$F_2(v):=\psi\left(\frac{iv}{2}\right)+\psi\left(-\frac{iv}{2}\right)$.

\end{lemma}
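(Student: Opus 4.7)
My plan treats the two assertions and the consequence in order.

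\emph{Part (i).} The partial fraction $\frac{k}{k^2+u}=\frac{1}{2}\Bigl(\frac{1}{k-i\sqrt u}+\frac{1}{k+i\sqrt u}\Bigr)$ writes $A(u)$ as a symmetric sum of two alternating series of the form $\sum_{k\geq 1}(-1)^{k-1}/(k+a)$. Splitting into odd and even indices in the Gauss identity $\psi(z)-\psi(w)=\sum_{n\geq 0}\bigl(\frac{1}{n+w}-\frac{1}{n+z}\bigr)$ with $(w,z)=\bigl(\frac{a+1}{2},\frac{a+2}{2}\bigr)$ yields the key identity
$$\sum_{k=1}^{\infty}\frac{(-1)^{k-1}}{k+a}=\frac{1}{2}\Bigl[\psi\Bigl(1+\frac{a}{2}\Bigr)-\psi\Bigl(\frac{a+1}{2}\Bigr)\Bigr].$$
Applying this with $a=\mp i\sqrt u$ and using the recurrence $\psi(1+z)=\psi(z)+1/z$ to rewrite $\psi(1\mp i\sqrt u/2)=\psi(\mp i\sqrt u/2)\pm 2i/\sqrt u$, the $\pm 2i/\sqrt u$ contributions cancel when the two sign choices are added; dividing by $2$ produces exactly \eqref{eq:A}. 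At $u=0$ the identity is taken as a limit: the simple poles of $\psi(\pm i\sqrt u/2)$ cancel by parity, and both sides equal $\log 2$.

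\emph{Part (ii).} I will invoke the classical expansion $\psi(z)=\log z+O(1/|z|)$ valid uniformly on $Re(z)\geq 0$ together with the reflection formula $\psi(1-z)-\psi(z)=\pi\cot(\pi z)$. On $\{|Im\,z|\geq 1\}$, $\psi$ is analytic (all poles lie on the real axis), and the elementary computation $|\sin(\pi z)|^2=\sin^2(\pi\,Re\,z)+\sinh^2(\pi\,Im\,z)\geq \sinh^2(\pi)$ shows that $|\pi\cot(\pi z)|$ is globally bounded on this region. The asymptotic applies directly to $\psi(z)$ when $Re(z)\geq 0$, and is transferred to the left half-plane via the reflection formula applied to $\psi(1-z)$. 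In both cases $|\psi(z)|\leq C(1+\log(1+|z|))$, which is majorized by $a+b|z|^2$ for suitable constants.

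\emph{Consequence.} Substituting \eqref{eq:A} into $p_c=8\int_0^{\infty}\frac{v\,A(v^2)}{\sinh(2\pi v)}\,dv$ from Lemma \ref{lBor1} gives
$$p_c=2\int_0^{\infty}\frac{v\bigl(F_2(v)-F_1(v)\bigr)}{\sinh(2\pi v)}\,dv.$$
To split this as $I_2-I_1$, each piece must be separately integrable. At infinity, integrability is automatic: for $v\geq 2$ the arguments of $\psi$ inside $F_k$ have imaginary part $\geq 1$, so (ii) gives $|F_k(v)|\leq a+bv^2$, dominated by the exponential growth of $\sinh(2\pi v)$. Near $v=0$, $F_1(0)=2\psi(1/2)$ is finite; for $F_2$, although $\psi$ has a simple pole at $0$, the Laurent expansion $\psi(z)=-1/z-\gamma+O(z)$ makes the two pole contributions cancel, so $F_2(v)\to -2\gamma$ and the integrand is bounded near $0$. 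The splitting is therefore justified by linearity.

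I expect the main effort to be concentrated in (i), where several digamma identities and the shift recurrence must be assembled with correct signs so that the off-diagonal $\pm 2i/\sqrt u$ terms cancel. Parts (ii) and the consequence are routine once one identifies the pole cancellation in $F_2$ and appeals to the standard asymptotics and reflection formula for $\psi$.
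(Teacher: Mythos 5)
Your argument is correct. For part (i) and the splitting $p_c=I_2-I_1$ it is essentially the paper's route: the paper derives \eqref{eq:A} as a ``direct consequence'' of the series $\psi(z)=-\gamma-\frac 1z+\sum_{n\geqp 1}\frac{z}{n(z+n)}$, and your intermediate identity $\sum_{k\geqp 1}\frac{(-1)^{k-1}}{k+a}=\frac 12\big[\psi\big(1+\frac a2\big)-\psi\big(\frac{a+1}{2}\big)\big]$ is just that series repackaged after separating even and odd indices, followed by the same recurrence $\psi(z+1)=\psi(z)+1/z$ to cancel the $\pm 2i/\sqrt u$ terms; the paper would instead combine the four series directly, which amounts to the same computation. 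The one genuine divergence is in part (ii): the paper reads the quadratic bound straight off the rearranged series $\psi(z)=-\gamma-\frac 1z+\big(\sum_{n\geqp 1}n^{-2}\big)z-\sum_{n\geqp 1}\frac{z^2}{n^2(z+n)}$, using $\abs{z+n}\geqp\abs{\mathrm{Im}\,z}\geqp 1$ to bound the tail by $\big(\sum n^{-2}\big)\abs{z}^2$ — a two-line elementary estimate — whereas you invoke the uniform asymptotic $\psi(z)=\log z+O(1/\abs{z})$ together with the reflection formula. That is heavier classical machinery, but it yields the stronger logarithmic bound $\abs{\psi(z)}\leqp C\big(1+\log(1+\abs{z})\big)$, which of course implies \eqref{ineg:psi}; either version suffices for dominating $vF_k(v)/\sinh(2\pi v)$ at infinity. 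Your verification that $I_1$ and $I_2$ converge separately — in particular the cancellation of the poles of $\psi(\pm iv/2)$ at $v=0$ so that $F_2(v)\to -2\gamma$ — is correct and supplies a justification the paper leaves implicit.
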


\begin{proof} Formula \eqref{eq:A} and inequality \eqref{ineg:psi}) are a direct consequence of  identity (3), p 15 in \cite{Bateman81}, i.e.

\begin{align}\label{eq:expansion_psi}
\psi(z)&=-\gamma-\frac 1z +\sum_{n\geqp 1} \frac{z}{n(z+n)}=-\gamma -\frac 1z +\Big(\sum_{n\geqp 1} \frac{1}{n^2}\Big)z  -\sum_{n\geqp 1} \frac{z^2}{n^2(z+n)}
\end{align}
with $\dis \gamma:=\lim_{m\to \infty} \Big(\sum_{k= 1}^m \frac{1}{k}-\ln m\Big)$.
\end{proof}

Due to the form of the functions $F_1$ and $F_2$, the integrals $I_1$ and $I_2$ can be viewed as integrals over a straight line in the plane. More precisely, we have:

\begin{lemma}\label{lBor2} $I_1$ and $I_2$ can be written as:

\begin{equation}\label{Bor6}
  I_1=-8i  \int_{\Delta_{1/2}} \frac{z-1/2}{\sin{(4\pi z)}} \psi(z)dz,\quad
  I_2=-8i  \int_{\Delta_{0}} \frac{z\psi(z+1)}{\sin{(4\pi z)}} dz
    \end{equation}

where, for any $a\in \R$, $\Delta_a$ is the line with parametrization  by $z=a+it$, $t\in\R$.
\end{lemma}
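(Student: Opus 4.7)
The plan is to convert each integral $I_k$ into a contour integral over the prescribed vertical line by a linear substitution in the $v$-variable, then to fold two copies of $\psi$ into one by a reflection symmetry of the contour. The only non-routine ingredients are the $2\pi i$-periodicity of $\sinh$ (which turns $\sinh(2\pi v)$ into $\sin(4\pi z)$) and the functional equation of $\psi$ (needed to regularize the integrand at the origin in the case of $I_2$).

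First, I observe that $F_k(-v)=F_k(v)$ and that $v/\sinh(2\pi v)$ is even, so $I_k=\int_{-\infty}^{\infty} v F_k(v)/\sinh(2\pi v)\,dv$. I then substitute $z=(1+iv)/2$ in $I_1$ (respectively $z=iv/2$ in $I_2$), which traverses $\Delta_{1/2}$ (resp. $\Delta_{0}$) upward as $v$ runs from $-\infty$ to $+\infty$ and yields $dv=-2i\,dz$. In both cases $2\pi v=-4\pi i z$ modulo $2\pi i$, so by the $2\pi i$-periodicity of $\sinh$ and the identity $\sinh(-4\pi i z)=-i\sin(4\pi z)$ one gets $\sinh(2\pi v)=-i\sin(4\pi z)$. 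Collecting the prefactors, I expect to obtain
\begin{align*}
I_1 &= -4i\int_{\Delta_{1/2}} \frac{(z-1/2)\bigl[\psi(z)+\psi(1-z)\bigr]}{\sin(4\pi z)}\,dz,\\
I_2 &= -4i\int_{\Delta_{0}} \frac{z\bigl[\psi(z)+\psi(-z)\bigr]}{\sin(4\pi z)}\,dz.
\end{align*}

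To finish, I would fold the two $\psi$-terms by contour symmetry. For $I_1$ the change $w=1-z$ preserves $\Delta_{1/2}$ (reversing its orientation), sends $z-1/2$ to $-(w-1/2)$ and $\sin(4\pi z)$ to $-\sin(4\pi w)$; the three sign flips cancel, so the $\psi(z)$ and $\psi(1-z)$ contributions coincide, delivering the claimed form for $I_1$. For $I_2$ the analogue $w=-z$ is the same idea, but the integrand $z\psi(z)/\sin(4\pi z)$ is singular at $z=0$, where $\psi$ has a simple pole and through which $\Delta_0$ passes. The remedy, which is the one delicate step of the argument, is to invoke the functional equation $\psi(z+1)=\psi(z)+1/z$ to rewrite $\psi(z)+\psi(-z)=\psi(z+1)+\psi(1-z)$; the right-hand bracket is analytic at $0$, so the integrand is now holomorphic on a neighbourhood of $\Delta_0$. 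The reflection $z\mapsto -z$ then combines the two halves into the advertised expression for $I_2$. Convergence at $\pm i\infty$ along both contours is immediate from the quadratic bound \eqref{ineg:psi} on $|\psi|$ together with the exponential growth $|\sin(4\pi z)|\sim \tfrac12 e^{4\pi|\operatorname{Im} z|}$ along any vertical line.
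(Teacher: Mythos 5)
Your proof is correct and rests on the same ingredients as the paper's: the substitutions $z=(1+iv)/2$ and $z=iv/2$, the identity $\sinh(2\pi v)=-i\sin(4\pi z)$ coming from the $2\pi i$-periodicity of $\sinh$, and the functional equation $\psi(z+1)=\psi(z)+1/z$ to remove the singularity of the $I_2$ integrand at the origin. The only difference is bookkeeping: the paper maps the two $\psi$-terms onto the upper and lower half-lines and concatenates them into $\Delta_{1/2}$ (resp.\ $\Delta_0$), whereas you symmetrize over $v\in\R$ first and then fold the two full-line integrals by the reflections $z\mapsto 1-z$ and $z\mapsto -z$.
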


\begin{proof}
1) We note that $\sin(2\pi iv)=i\sinh(2\pi v)$. Thus
\begin{align*}
2\int_0^{\infty} \frac{v}{\sinh(2\pi v)} \psi\left(\frac{1+iv}{2}\right)dv&=
\frac{8}{i}\int_0^{\infty} \frac{\frac{1+iv}{2}-\frac 12}{\sin\left(4\pi i \left(\frac{1+iv}{2}\right)\right)} \psi\left(\frac{1+iv}{2}\right)d\left(\frac{1+iv}{2}\right)\\
&=-8i\int_{\Delta'_{1/2}} \frac{z-1/2}{\sin\left(4\pi z\right)} \psi\left(z\right)dz
\end{align*}
where $\Delta'_a$ is the half-line: $z=a+it$, $t\geqp 0$. Similarly,
\begin{align*}
2\int_0^{\infty} \frac{v}{\sinh(2\pi v)} \psi\left(\frac{1-iv}{2}\right)dv&=-8i\int_{\Delta''_{1/2}} \frac{z-1/2}{\sin\left(4\pi z\right)} \psi\left(z\right)dz
\end{align*}
where $\Delta''_a:=\{z=a+it, t\leqp 0\}$ and $a\in \R$. This implies the value of $I_1$ given by \eqref{Bor6}.

2) Formula \eqref{eq:expansion_psi} tells us that $\hat{\psi}(z):=\psi(z)+\frac 1z$ has no singularity at $z=0$.  Thus we study
\begin{align*}
2 \int_0^{\infty} \frac{v}{\sinh{(2\pi v)}} \hat{\psi}\left(\frac{iv}{2}\right)dv
&= \frac{8}{i} \int_0^{\infty} \frac{iv/2}{\sin{\left(4\pi \frac{iv}{2}\right)}} \hat{\psi}\left(\frac{iv}{2}\right)d\left( \frac{iv}{2}\right)
= -8i \int_{\Delta'_{0}} \frac{z}{\sin\left(4\pi z\right)} \hat{\psi}\left(z\right)dz
\end{align*}
and similarly
\begin{align*}
2 \int_0^{\infty} \frac{v}{\sinh{(2\pi v)}} \hat{\psi}\left(-\frac{iv}{2}\right)dv= 2 \int_0^{\infty} \frac{iv}{\sin{(2\pi i v)}} \hat{\psi}\left(\frac{iv}{2}\right)dv= -8i \int_{\Delta''_{0}} \frac{z}{\sin\left(4\pi z\right)} \hat{\psi}\left(z\right)dz.
\end{align*}
The identity (formula (8) p 16 in \cite{Bateman81}) :
\begin{equation}\label{Bor7}
  \hat{\psi}(z)=\psi(z)+\frac 1z=\psi(z+1)
\end{equation}
implies $\dis
\psi\left(\frac{iv}{2}\right)+\psi\left(-\frac{iv}{2}\right)=\hat{\psi}\left(\frac{iv}{2}\right)+\hat{\psi}\left(-\frac{iv}{2}\right)
$
and finally \eqref{Bor6}.
\end{proof}

We show in the following lemma that $I_1$ and $I_2$ are integrals over the vertical line.

\begin{lemma}\label{lem:I_1-I_2}
Let $0<\varepsilon <1/4$, then
\begin{equation}\label{Bor8}
    I_1=-8i \int_{\Delta_{1/2+\varepsilon}} \frac{z-1/2}{\sin{(4\pi z)}} \psi(z)dz,\quad
    I_2=-8i \int_{\Delta_{1/2+\varepsilon}} \frac{z\psi(z+1)}{\sin{(4\pi z)}}  dz+\psi\left( 1/4\right) - 2 \psi\left( 1/2\right).
\end{equation}
\end{lemma}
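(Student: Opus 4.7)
The plan is to shift each of the two line integrals in Lemma \ref{lBor2} from its original vertical contour to $\Delta_{1/2+\varepsilon}$ by applying Cauchy's residue theorem on a rectangle and letting the horizontal sides recede to $\pm i\infty$. The key analytic input is that the horizontal pieces vanish in the limit: the bound $|\sin(4\pi z)|\geqp \tfrac12 e^{4\pi|\mathrm{Im}\,z|}$, valid on any vertical strip of bounded real part once $|\mathrm{Im}\,z|$ is large, combined with the polynomial estimate \eqref{ineg:psi} on $\psi$, makes the integrand decay exponentially on horizontal segments.

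For $I_1$ I would work with the rectangle bordered by $\Delta_{1/2}$ and $\Delta_{1/2+\varepsilon}$. In the closed strip $1/2\leqp \mathrm{Re}\,z\leqp 1/2+\varepsilon$ the function $\psi$ is holomorphic (its poles lie at $0,-1,-2,\ldots$), and the only zero of $\sin(4\pi z)$ inside, at $z=1/2$, is cancelled by the factor $z-1/2$ in the numerator. Hence the integrand is holomorphic on the whole closed strip, no residues are picked up, and Cauchy's theorem gives the first identity in \eqref{Bor8}.

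For $I_2$ the same strategy is applied to $z\psi(z+1)/\sin(4\pi z)$ between $\Delta_0$ and $\Delta_{1/2+\varepsilon}$. Since $\varepsilon<1/4$, the zeros of $\sin(4\pi z)$ in this strip occur at $z=0,1/4,1/2$, and $\psi(z+1)$ is holomorphic there (its own poles sitting at $-1,-2,\ldots$). The point $z=0$ is a removable singularity thanks to the factor $z$, but $z=1/4$ and $z=1/2$ yield genuine simple poles. Using $(\sin(4\pi z))'=4\pi\cos(4\pi z)$, the residues compute to
\begin{equation*}
\mathrm{Res}_{z=1/4}\frac{z\psi(z+1)}{\sin(4\pi z)}=-\frac{\psi(5/4)}{16\pi},\qquad \mathrm{Res}_{z=1/2}\frac{z\psi(z+1)}{\sin(4\pi z)}=\frac{\psi(3/2)}{8\pi}.
\end{equation*}
Multiplying the corresponding residue contribution by the prefactor $-8i$ and tracking the orientation of the rectangle, the contour shift produces the boundary term $\psi(5/4)-2\psi(3/2)$. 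The functional equation \eqref{Bor7}, which gives $\psi(5/4)=\psi(1/4)+4$ and $\psi(3/2)=\psi(1/2)+2$, then collapses this to exactly $\psi(1/4)-2\psi(1/2)$, as asserted.

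The main technical point is the vanishing of the horizontal contributions, i.e.\ verifying that \eqref{ineg:psi} is applicable uniformly on the horizontal segments as $T\to\infty$; this is immediate since all poles of $\psi$ are on the real axis and therefore at distance $\geqp 1$ from the horizontal sides once $|\mathrm{Im}\,z|\geqp 1$. Everything else is bookkeeping of orientations together with one application of \eqref{Bor7}.
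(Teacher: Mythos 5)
Your proof is correct and follows essentially the same route as the paper: shift the contour by the residue theorem on a rectangle, kill the horizontal sides using the polynomial bound \eqref{ineg:psi} on $\psi$ against the exponential growth of $\sin(4\pi z)$, pick up the simple poles at $z=1/4$ and $z=1/2$ (with $z=0$, resp.\ $z=1/2$, removable), and reduce $\psi(5/4)-2\psi(3/2)$ to $\psi(1/4)-2\psi(1/2)$ via \eqref{Bor7}. Your residue values and the final boundary term agree exactly with the paper's computation.
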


\begin{proof} We only deal with $I_2$, the proof related to $I_1$ is similar and easier.

 The quantity $\sin(4\pi z)$ cancels at $z=k/4$ for every $k\in \Z$, the zeros are simple. From \eqref{eq:expansion_psi}, we deduce that $h(z):=z\psi(z+1)/\sin{(4\pi z)}$ is meromorphic in
 $\left\{z\in \C;\; -1/4<Re z <3/4\right\}$ with poles at $1/4$ and $1/2$. We introduce the contour defined in Figure \ref{fig:contour}.

\begin{figure}[h]
\centering
\includegraphics[scale=0.3]{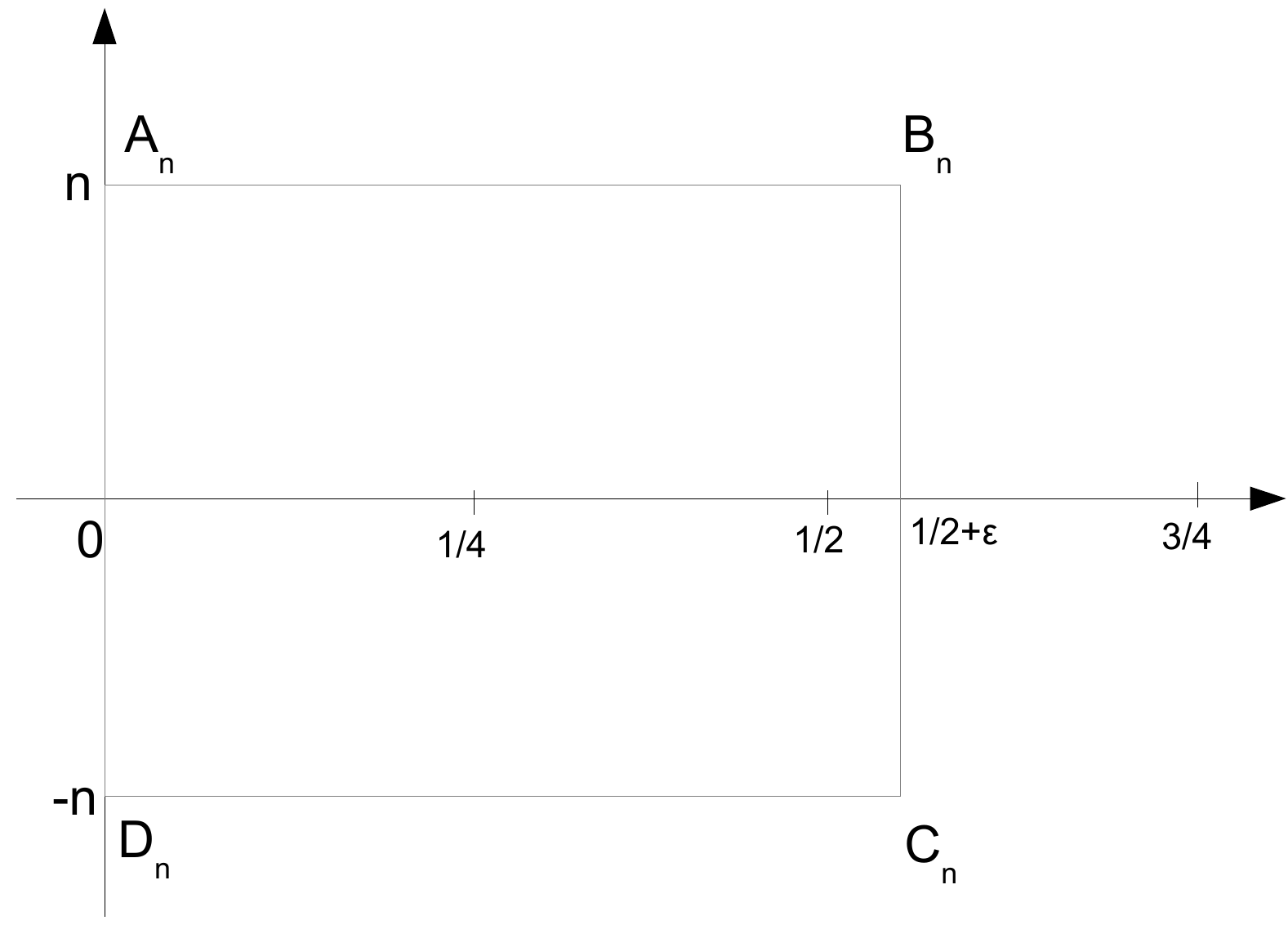}
\label{fig:contour}
\end{figure}

Then the residue theorem gives
\begin{align}\label{eq:residu_h}
\int_{C_nB_n}h(z)dz-\int_{D_nA_n}h(z)dz+\int_{D_nC_n}h(z)dz-\int_{B_nA_n} h(z)dz&=2i\pi \left\{Res\left(h,1/4\right)+Res\left(h,1/2\right)\right\}.
\end{align}
The residual at $1/4$ is given by
\begin{align*}
Res\left(h,1/4\right)&=\lim_{z\to 1/4}\Big\{ h(z)(z-1/4)\Big\}=\frac 14 \psi\left(5/4\right)\Big( \lim_{z\to 1/4} \frac{z-1/4}{\sin{(4\pi z)}-\sin{(4\pi 1/4)}}\Big)
&=-\frac{1}{16\pi}\psi\left(5/4\right)
\end{align*}
Using \eqref{Bor7} with $z=1/4$, we get $\dis Res\left(h,1/4\right)=-\frac{1}{4\pi}\left(1+\frac14 \psi\left(1/4\right)\right)$. Similarly, $\dis Res\left(h,1/2\right)=\frac{1}{4\pi}\left(1+ \psi\left(1/2\right)/2\right)$.

Now, we let $n$ goes to infinity. Inequality \eqref{ineg:psi} implies
\begin{align*}
\lim_{n\to \infty} \int_{A_nB_n} h(z)dz=\lim_{n\to \infty} \int_{C_nD_n} h(z)dz=0.
\end{align*}
Indeed, it follows from  the  parametrization of $A_nB_n$ of the type  $z=in+t$, inequality $\abs{in+t}\leqp n+1$ valid for
 $0\leqp t \leqp \varepsilon +1/2$, and
\begin{align*}
\big|\sin(4 \pi (in+t))\big|^2
&=\sinh(4 \pi n)^2 \cos(4 \pi t)^2+ \cosh(4 \pi n)^2 \sin(4 \pi t)^2\\
&\geqp \min\left\{\sinh(4 \pi n)^2,\cosh(4 \pi n)^2\right\}.
\end{align*}


We proceed analogously on $C_nD_n$. As a consequence, letting $n\to\infty$ in \eqref{eq:residu_h}, we get
\begin{align*}
I_2
&= -8i \left\{\int_{\Delta_{1/2+\varepsilon}} h(z)dz-2i\pi \left[-\frac{1}{4\pi}\left(1+ \psi\left(1/4\right)/4\right)+\frac{1}{4\pi}\left(1+ \psi\left(1/2\right)/2\right)\right]\right\}\\
&= -8i \int_{\Delta_{1/2+\varepsilon}} h(z)dz+ \psi\left(1/4\right)-2 \psi\left(1/2\right).
\end{align*}

\end{proof}

Bringing together  Lemmas \ref{lem:A}, \ref{lem:I_1-I_2} leads to
\begin{align*}
p_c&=-8i\int_{\Delta_{1/2+\varepsilon}} \frac{1}{\sin{(4\pi z)}} \left(z \psi(z+1)-(z-\frac 12) \psi(z)\right) dz+ \psi\left( 1/4\right) - 2 \psi\left( 1/2\right).
\end{align*}
Setting $z=1/2+u$ and using identity \eqref{Bor7} with $u+1/2$ instead of $z$ gives:
\begin{eqnarray}
p_c&=\dis -8i\int_{\Delta_{\varepsilon}}  h^+(u) du+ \psi\left( 1/4\right) - 2 \psi\left( 1/2\right),\label{Bor12}
\end{eqnarray}
where  $\dis h^{\pm}(z):=\frac{1}{\sin{(4\pi z)}} \left(1+\frac 12 \psi(\frac 12\pm z)\right)$.

We are not able to calculate directly $\dis \int_{\Delta_{\varepsilon}}  h^+(u) du$, however it is possible for
 $\dis \int_{\Delta_{\varepsilon}}  h^+(u) du\pm  \int_{\Delta_{\varepsilon}}  h^-(u) du$.

 \begin{lemma}\label{lBor3} For any $\varepsilon \in]0,1/4[$,
 \begin{eqnarray}
   \int_{\Delta_{\varepsilon}}  h^+(u) du + \int_{\Delta_{\varepsilon}}  h^-(u) du &=& \frac{i}{2}\left(1+\psi(1/2)/2\right)
   \label{Bor9}\\
   \int_{\Delta_{\varepsilon}}  h^+(u) du - \int_{\Delta_{\varepsilon}}  h^-(u) du &=&
   \frac{\pi}{2}\int_{\Delta_{\varepsilon}}\frac{\tan(\pi z)}{\sin(4\pi z)}dz. \label{Bor10}
 \end{eqnarray}
 \end{lemma}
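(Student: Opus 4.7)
To establish \eqref{Bor9} I would introduce $g(z):=h^+(z)+h^-(z)$ and first observe that $g$ is \emph{odd}. Indeed, since $\sin(4\pi(-z))=-\sin(4\pi z)$, one has $h^+(-z)=-h^-(z)$ and $h^-(-z)=-h^+(z)$, so $g(-z)=-g(z)$. Using the parametrization $z=\pm\varepsilon+it$ of $\Delta_{\pm\varepsilon}$ together with the change of variables $t\mapsto -t$, this oddness yields
\begin{equation*}
\int_{\Delta_{-\varepsilon}}g(z)\,dz=-\int_{\Delta_\varepsilon}g(z)\,dz.
\end{equation*}

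I would then apply the residue theorem to $g$ on the rectangle with vertices $\pm\varepsilon\pm iN$. Because $0<\varepsilon<1/4$, the functions $\psi(1/2\pm z)$ are holomorphic in the open strip $\{|Re\,z|<1/2\}$ (their poles lying at $1/2+\mathbb{N}$ and $-1/2-\mathbb{N}$), while the zeros of $\sin(4\pi z)$ form the set $\frac14\mathbb{Z}$. Consequently the only pole of $g$ inside the rectangle is the simple pole at $z=0$, whose residue is
\begin{equation*}
Res(g,0)=\lim_{z\to 0}\frac{z\bigl(2+\tfrac12(\psi(1/2+z)+\psi(1/2-z))\bigr)}{\sin(4\pi z)}=\frac{2+\psi(1/2)}{4\pi}.
\end{equation*}
Letting $N\to\infty$ and invoking the vanishing of the horizontal pieces (see below) yields $\int_{\Delta_\varepsilon}g-\int_{\Delta_{-\varepsilon}}g=2i\pi\,Res(g,0)=i\bigl(1+\psi(1/2)/2\bigr)$, which combined with the oddness relation above gives \eqref{Bor9}.

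For \eqref{Bor10} the difference $h^+(z)-h^-(z)$ involves only $\psi(1/2+z)-\psi(1/2-z)$. I would appeal to the classical reflection formula $\psi(1-x)-\psi(x)=\pi\cot(\pi x)$ with $x=1/2-z$, which produces $\psi(1/2+z)-\psi(1/2-z)=\pi\cot(\pi(1/2-z))=\pi\tan(\pi z)$. Therefore
\begin{equation*}
h^+(z)-h^-(z)=\frac{\pi\tan(\pi z)}{2\sin(4\pi z)},
\end{equation*}
and integrating over $\Delta_\varepsilon$ gives \eqref{Bor10} on inspection.

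The principal technical obstacle is the vanishing of the two horizontal pieces of the rectangular contour. This reduces to showing that the polynomial growth of $\psi$ supplied by \eqref{ineg:psi} is dominated by the exponential growth of $|\sin(4\pi z)|$ on the segments $\{Im\,z=\pm N\}$, through an estimate of the form $|\sin(4\pi(s\pm iN))|^2\geqp\min\{\sinh(4\pi N)^2,\cosh(4\pi N)^2\}$. Since exactly this argument was already carried out in Lemma \ref{lem:I_1-I_2}, I expect no difficulty, and the proof will be a direct transcription of it.
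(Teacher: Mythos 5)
Your proof is correct and follows essentially the same route as the paper's: a contour shift from $\Delta_{\varepsilon}$ to $\Delta_{-\varepsilon}$ that picks up the residue at $z=0$, combined with the antisymmetry $h^{\pm}(-z)=-h^{\mp}(z)$ for \eqref{Bor9}, and the reflection formula $\psi(1/2+z)-\psi(1/2-z)=\pi\tan(\pi z)$ for \eqref{Bor10}. The only cosmetic difference is that you apply the residue theorem to the symmetrized sum $h^{+}+h^{-}$ rather than to $h^{+}$ alone; the residues and the final constant agree.
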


\begin{proof}
1) We begin proving \eqref{Bor9}. The function $h^+$ is  meromorphic in $\left\{z\in \C;\; -1/4<Re z <1/4\right\}$ with unique pole $z=0$.  Then the residue theorem yields
$$
\int_{\Delta_{\varepsilon}}h^+(z)dz=\int_{\Delta_{-\varepsilon}}h^+(z)dz+2i\pi Res\left( h^+,0\right)=-\int_{\Delta_{\varepsilon}}h^-(z)dz+\frac{i}{2}\left(1+\psi(1/2)/2\right).
$$
2) Formula \eqref{Bor10} is a direct consequence of formula 11 p16  in \cite{Bateman81}: $\psi\left(\frac 12+z\right)-\psi\left(\frac 12-z\right)=\pi\tan(\pi z)$.
\end{proof}

It is easy to deduce from \eqref{Bor9} and \eqref{Bor10} that:
\begin{align*}
2\int_{\Delta_{\varepsilon}}h^+(z)d&z=\frac{\pi}{2}\int_{\Delta_{\varepsilon}}\frac{\tan(\pi z)}{\sin(4\pi z)}dz+\frac{i}{2}\left(1+\psi(1/2)/2\right).
\end{align*}

Relation \eqref{Bor12} implies directly that $p_c$ equals $\psi\left( 1/4\right) - \psi\left( 1/2\right)+2-2\pi i\alpha(\varepsilon)$, where $\dis \alpha(\varepsilon):=\int_{\Delta_{\varepsilon}}\frac{\tan(\pi z)}{\sin(4\pi z)}dz$.
Since the real number $p_c$ does not depend on $\varepsilon$, letting $\varepsilon\to 0$ leads to:
\begin{equation}\label{Bor14}
    p_c=\psi\left( 1/4\right) - \psi\left( 1/2\right)+2-2\pi i\alpha(0+),
\end{equation}

where  $\dis \alpha(0+)=\int_{\Delta_{0}}\frac{\tan(\pi z)}{\sin(4\pi z)}dz$. This integral is easy to calculate.

\begin{align*}
\alpha(0+)&=i \int_{\R}\frac{\tanh(\pi x)}{\sinh(4\pi x)}dx=\frac{i}{4}
\int_{\R}\frac{dx}{\cosh^2(\pi x)\cosh(2\pi x)}.
\end{align*}
We make the change of variable $u=\tanh(\pi x)$:
\begin{align*}
\alpha(0+)&=\frac{i}{2\pi} \int_{0}^1\frac{1-u^2}{1+u^2}du=\frac{i}{2\pi}\left(-1+2 \int_{0}^1\frac{du}{1+u^2}\right)=\frac{i}{2\pi}\left(-1+\frac{\pi}{2}\right).
\end{align*}

Theorem \ref{tTu1} follows from \eqref{Bor14} and the above result.



\begin{thebibliography}{10}

\bibitem{Bertoin96}
Jean Bertoin.
\newblock {\em L\'evy processes}, volume 121 of {\em Cambridge Tracts in
  Mathematics}.
\newblock Cambridge University Press, Cambridge, 1996.

\bibitem{BLGY87}
Ph. Biane, J.-F. Le~Gall, and M.~Yor.
\newblock Un processus qui ressemble au pont brownien.
\newblock In {\em S\'eminaire de {P}robabilit\'es, {XXI}}, volume 1247 of {\em
  Lecture Notes in Math.}, pages 270--275. Springer, Berlin, 1987.

\bibitem{Billingsley99}
P.~Billingsley.
\newblock {\em Convergence of probability measures}.
\newblock Wiley Series in Probability and Statistics: Probability and
  Statistics. John Wiley \& Sons Inc., New York, second edition, 1999.
\newblock A Wiley-Interscience Publication.

\bibitem{BS02}
Andrei~N. Borodin and Paavo Salminen.
\newblock {\em Handbook of {B}rownian motion---facts and formulae}.
\newblock Probability and its Applications. Birkh\"auser Verlag, Basel, second
  edition, 2002.

\bibitem{CLMV14}
C.~Chabriac, A.~Lagnoux, S.~Mercier, and P.~Vallois.
\newblock Elements related to the largest complete excursion of a reflected
  {B}rownian motion stopped at a fixed time. {A}pplication to local score.
\newblock {\em Stochastic Processes and their Applications}, 124(12), 2014.

\bibitem{Bateman81}
Arthur Erd{\'e}lyi, Wilhelm Magnus, Fritz Oberhettinger, and Francesco~G.
  Tricomi.
\newblock {\em Higher transcendental functions. {V}ol. {I}}.
\newblock Robert E. Krieger Publishing Co., Inc., Melbourne, Fla., 1981.
\newblock Based on notes left by Harry Bateman, With a preface by Mina Rees,
  With a foreword by E. C. Watson, Reprint of the 1953 original.

\bibitem{Feller71b}
William Feller.
\newblock {\em An introduction to probability theory and its applications.
  {V}ol. {II}.}
\newblock Second edition. John Wiley \& Sons, Inc., New York-London-Sydney,
  1971.

\bibitem{KAl90}
S.~Karlin and S.-F. Altschul.
\newblock Methods for assessing the statistical significance of molecular
  sequence features by using general scoring schemes.
\newblock {\em PNAS}, 87:2264--2268, 1990.

\bibitem{MD01}
S.~Mercier and J.J. Daudin.
\newblock Exact distribution for the local score of one i.i.d. random sequence.
\newblock {\em Jour. Comp. Biol}, 8(4):373--380, 2001.

\bibitem{PY01}
Jim Pitman and Marc Yor.
\newblock On the distribution of ranked heights of excursions of a {B}rownian
  bridge.
\newblock {\em Ann. Probab.}, 29(1):361--384, 2001.

\bibitem{RVY08}
B.~Roynette, P.~Vallois, and M.~Yor.
\newblock Penalisations of {B}rownian motion with its maximum and minimum
  processes as weak forms of {S}korokhod embedding.
\newblock {\em Theory Stoch. Process.}, 14(2):116--138, 2008.

\bibitem{Wat95}
M.~S. Waterman.
\newblock {\em Introduction to Computational Biology: Maps, Sequences and
  Genomes}.
\newblock Chapman \& Hall, 1995.

\bibitem{Watson95}
G.~N. Watson.
\newblock {\em A treatise on the theory of {B}essel functions}.
\newblock Cambridge Mathematical Library. Cambridge University Press,
  Cambridge, 1995.
\newblock Reprint of the second (1944) edition.

\end{thebibliography}





\end{document}